\newtheorem{theorem}{Theorem}
\newtheorem{lemma}[theorem]{Lemma}
\theoremstyle{definition}
\newtheorem{remark}[theorem]{Remark}
\begin{document}
\title[]{A note on the reachability of a Fibonacci control system}
\author[]{Anna Chiara Lai }
\address{Dipartimento  di Matematica e Fisica\\
Universita' degli Studi di Roma Tre\\
Largo San Murialdo, 1\\ 00146
 Roma}
 
\email{aclai@mat.uniroma3.it}

\subjclass[2010]{70E60,11A63}

\maketitle \thispagestyle{empty}

%
\begin{abstract} 
Motivated by applications in robotics, we investigate a discrete control system related Fibonacci sequence and we characterize its reachable set. 
%
\end{abstract}

This note is devoted to the characterization of the reachable set of the discrete control system
\begin{equation*}\label{F}\tag{F}
 \begin{cases}
  x_0=u_0\\
  x_1=u_1+\frac{u_0}{q}\\
  x_{n+2}=u_{n+2}+\frac{x_{n+1}}{q}+\frac{x_{n}}{q^2}.
 \end{cases}
\end{equation*}
Motivations in investigating the systems of the form (F) come from robotics, indeed it can be shown that $x_n$ represents the
total length of a telescopic, self-similar robotic arm \cite{LLV14,LLV15}. 

 By an inductive argument we get the closed formula 
$$x_n=x_n(u)=\sum_{k=0}^{n} \frac{f_k}{q^k}u_{n-k},$$
where $f_{k}=f_{k-1}+f_{k-2}, f_1=f_0=1$ denotes Fibonacci sequence \cite{LLV15}. Consequently the \emph{asymptotic reachable set} $\mathcal R_q$ of the system (F) reads
 \begin{equation}\label{r}
  \mathcal R_q:=\{\lim_{n\to\infty} x_n(u)\mid u\in\{0,1\}^\infty\}=\left\{\sum_{k=0}^\infty \frac{f_k}{q^k}u_k\mid u_k\in\{0,1\}\right\}.
 \end{equation}
 
  \begin{remark}
  The set $\mathcal R_q$ is well defined if and only if the scaling ratio $q$ is greater than the 
  Golden Mean $\varphi$, this indeed ensures the convergence of the series $\sum_{k=0}^\infty \frac{f_k}{q^k}u_k$.
  \end{remark}
 
%
%

In order to give a full description of $\mathcal R_q$, we shall make use of the following definitions
\begin{align*}
\mathcal R_{q,j}&:=\left\{\sum_{k=0}^\infty \frac{f_{j+k}}{q^k}u_k\mid u_k\in\{0,1\}\right\}\\
S(q,j)&:=\sum_{k=0}^\infty \frac{f_{j+k}}{q^k}=\frac{f_{j}q^2+f_{j-1}q}{q^2-q-1}\\
Q(j)&:= \text{greatest solution of the equation $S(q,j+1)=q f_{j}$}\\
&= \frac{1}{2 f_j}(f_{j+2}+\sqrt{f_{j+2}^2+8 f_j^2}).
\end{align*}
Notice that, as $j\to\infty$, for all $q>\varphi$, $S(q,j)\uparrow \infty$ while $Q(j)\uparrow \frac{1}{2}(\varphi^2+\sqrt{\varphi^2+1})$. Also notice the recursive
relation 
\begin{equation}\label{S2}
 S(q,j)= q (S(q,j-1) - f_{j-1}).
\end{equation}

\begin{lemma}\label{reachability}
If $q\in(\varphi, Q(j)]$ then $\mathcal R_{q,j}=[0,S(q,j)]$. 
\end{lemma}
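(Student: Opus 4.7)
The plan is to prove the nontrivial inclusion $[0,S(q,j)]\subseteq\mathcal{R}_{q,j}$; the reverse inclusion is immediate from the termwise bounds $0\le f_{j+k}u_k/q^k\le f_{j+k}/q^k$. My guiding observation is that conditioning on the value of $u_0$ yields the self-similar decomposition
$$\mathcal{R}_{q,j}=\frac{1}{q}\mathcal{R}_{q,j+1}\cup\Bigl(f_j+\frac{1}{q}\mathcal{R}_{q,j+1}\Bigr),$$
and that \eqref{S2} rewrites as $S(q,j)=f_j+S(q,j+1)/q$, so that if one already had $\mathcal{R}_{q,j+1}=[0,S(q,j+1)]$ the two pieces above would cover $[0,S(q,j)]$ precisely when $S(q,j+1)/q\ge f_j$. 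Substituting the closed form of $S$ into this overlap inequality reduces it to the quadratic condition $f_j q^2-f_{j+2}q-2f_j\le 0$, whose positive root is exactly $Q(j)$. Hence the geometric overlap at stage $j$ is equivalent to the hypothesis $q\le Q(j)$.

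I then carry out an explicit greedy construction. For $x\in[0,S(q,j)]$ I set $x^{(0)}:=x$; at stage $n\ge 0$, if $x^{(n)}\ge f_{j+n}$ I put $u_n:=1$ and $x^{(n+1)}:=q(x^{(n)}-f_{j+n})$, and otherwise $u_n:=0$ and $x^{(n+1)}:=qx^{(n)}$. The invariant to be checked by induction on $n$ is $x^{(n)}\in[0,S(q,j+n)]$: when $x^{(n)}\ge f_{j+n}$ the recursion \eqref{S2} gives $x^{(n+1)}\le q(S(q,j+n)-f_{j+n})=S(q,j+n+1)$, while when $x^{(n)}<f_{j+n}$ the new remainder satisfies $x^{(n+1)}<qf_{j+n}$, which is bounded by $S(q,j+n+1)$ exactly by the overlap condition at stage $j+n$. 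Here the monotonicity $Q(j)\uparrow$ recorded right after the definition of $Q$ is crucial: combined with the hypothesis $q\le Q(j)$, it guarantees $q\le Q(j+n)$ for every $n\ge 0$, so the greedy procedure never gets stuck.

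Unrolling the recursion $x^{(n)}=f_{j+n}u_n+x^{(n+1)}/q$ produces the partial sums
$$x=\sum_{k=0}^{n-1}\frac{f_{j+k}}{q^k}u_k+\frac{x^{(n)}}{q^n},$$
so it remains to check that the tail vanishes. Using $x^{(n)}\le S(q,j+n)$ together with the Binet asymptotic $f_{j+n}\sim\varphi^{j+n}/\sqrt 5$ and the closed form of $S$, one finds $S(q,j+n)/q^n=O((\varphi/q)^n)\to 0$ because $q>\varphi$; thus $x=\sum_{k\ge 0}f_{j+k}u_k/q^k\in\mathcal{R}_{q,j}$. I expect the main technical point to be the algebraic identification of the overlap threshold with the root $Q(j)$ of the quadratic $f_jq^2-f_{j+2}q-2f_j$, which is the combinatorial--algebraic heart of the statement; the greedy construction and the geometric tail estimate using $q>\varphi$ are otherwise standard.
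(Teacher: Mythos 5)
Your proposal is correct and follows essentially the same route as the paper's own proof: the identical greedy construction, the same inductive invariant $x^{(n)}\in[0,S(q,j+n)]$ verified via the recursion \eqref{S2} in one case and via $qf_{j+n}\le S(q,j+n+1)$ (i.e.\ $q\le Q(j+n)$, using monotonicity of $Q$) in the other, and the same tail estimate $S(q,j+n)/q^n\to 0$ from $q>\varphi$. The opening self-similar decomposition of $\mathcal R_{q,j}$ and the explicit identification of the overlap threshold with the positive root of $f_jq^2-f_{j+2}q-2f_j$ are a nice motivational gloss, but the working proof is the paper's argument.
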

\begin{proof}
We show the claim by double inclusion. The inclusion $\mathcal R_{q,j}\subseteq[0,S(q,j)]$ readily follows by the definitions of $\mathcal R_{q,j}$ and of $S(q,j)$. 
To show the other inclusion, for all  $x\in[0,S(q,j)]$ we consider the sequences $(r_h)$ and $(u_h)$ defined by
\begin{equation}\label{rh}
 \begin{cases}
  r_0=x;\\
  u_h=\begin{cases}
1 &\text{if } r_h\in[f_{j+h},S(q,j+h)]\\
0 &\text{otherwise }
 \end{cases}\\
r_{h+1}=q(r_h-u_h f_{j+h})
\end{cases}
\end{equation}

We show by induction 
\begin{equation}\label{remainder}
 x=\sum_{k=0}^h \frac{f_{j+k}}{q^k}u_k + \frac{r_{h+1}}{q^{h+1}} \quad \text{ for all } h\geq 0.
\end{equation}
  For $h=0$ one has $r_1=q(x-u_0f_k)$ and consequently $x=f_ku_0+r_1/q$. Assume now (\ref{remainder}) as inductive hypothesis. Then
$$r_{h+2}=q^{h+2} \left( x- \sum_{k=0}^h \frac{f_{j+k}}{q^k}u_k\right) - q f_{j+h+1}u_{h+1}$$
Consequently,
$$x=\sum_{k=0}^{h+1} \frac{f_{j+k}}{q^k}u_k + \frac{r_{h+2}}{q^{h+2}}$$
and this completes the proof of the inductive step and, therefore, of (\ref{remainder}).

Now we claim that  if $q\leq Q(j)$ then
\begin{equation}\label{bound}
 r_h\in[0,S(q,j+h)]\quad \text{for every $h$}.
\end{equation}
We show the above inclusion by induction. If $h=0$ then the claim follows by the definition of $r_0$ and by the fact that $x\in[0,S(q,j)]$. Assume now (\ref{bound}) as inductive hypothesis and 
notice that the definition of $S(q,j+h)$ implies $f_{j+h}\leq S(q,j+h)$. If $r_h\in [0,f_{j+h})$ then $r_{h+1}=q r_h\in [0,q f_{j+h}]\subseteq [0, S(q,j+h+1)]$ -- 
where the last inclusion follows by the definition of $Q(j+h)$ and by the fact that $q\leq Q(j)<Q(j+h)$. 
If otherwise $r_h\in [f_{j+h},S(q,j+h)]$ then $r_{h+1}=q(r_h-f_{j+h})\subseteq [0, q(S(q,j+h)-f_{j+h})]=[0, S(q,j+h+1)]$ (see (\ref{S2})) and this completes the proof of (\ref{bound}).

Recalling $f_n\sim \varphi^n$ as $n\to \infty$,  one has
\begin{align*}
\sum_{k=0}^\infty \frac{f_{j+k}}{q^k}u_k&=\lim_{h\to\infty} \sum_{k=0}^{h-1} \frac{f_{j+k}}{q^k}u_k\stackrel{(\ref{remainder})}{=} x-\lim_{h\to\infty}\frac{r_h}{q^h}
\stackrel{(\ref{bound})}{\geq} x-\lim_{h\to\infty} \frac{S(q,j+h)}{q^h}\\&= x-\lim_{h\to\infty} \frac{q^2 f_{h+j+1}+ q f_{j+h}}{q^{j+h}(q^2-q-1)}=x.
\end{align*}
On the other hand
$$\sum_{k=0}^\infty \frac{f_k}{q^k}u_k=x-\lim_{h\to\infty} \frac{r_h}{q^h}\leq x$$
and this proves $x=\sum_{h=0}^\infty \frac{f_{h+k}}{q^h} u_h$. It follows by the arbitrariety of $x$ that $[0,S(q,k)]\subseteq \mathcal R_{q,k}$ and this concludes the proof. 
\end{proof}

\begin{remark}
By applying Lemma \ref{reachability} to the case $j=0$ we get that if $q\in (\varphi,Q(0)]$ then $\mathcal R_q=[0,S(q,0)]$.
This result was already proved in \cite{LLV15}.
\end{remark}

\begin{theorem}
  For all $j\geq 1$ if $q\in (Q(j-1),Q(j)]$ then  $\mathcal R_q$ is composed by the disjoint union of $2^j$ intervals,  in particular
 \begin{equation}\label{eq}
 \mathcal R_q=\bigcup_{u_0,\dots,u_{j-1}\in \{0,1\}} \left[ \sum_{k=0}^{j-1} \frac{f_k}{q^k} u_k, \sum_{k=0}^{j-1} \frac{f_k}{q^k} u_k+S(q,j)\right]. 
 \end{equation}
 Moreover if $q\geq \frac{1}{2}(\varphi^2+\sqrt{\varphi^2+1})$ then the map $u\mapsto x_u=\sum_{k=0}^\infty \frac{f_k}{q^k}u_k$ is increasing with respect to the lexicographic order and $\mathcal R_q$ is a totally disconnected set. 
\end{theorem}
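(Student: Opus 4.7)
The plan is to split the series defining $\mathcal R_q$ according to the first $j$ coefficients, apply Lemma~\ref{reachability} to the tail, and then establish disjointness of the resulting intervals via a worst-case comparison controlled by the defining equation of $Q(i)$.

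First, reindexing the tail via $\ell=k-j$ yields
$$\mathcal R_q=\bigcup_{u_0,\ldots,u_{j-1}\in\{0,1\}}\left(\sum_{k=0}^{j-1}\frac{f_k}{q^k}u_k+\frac{1}{q^j}\,\mathcal R_{q,j}\right).$$
Since $q\le Q(j)$, Lemma~\ref{reachability} applies and gives $\mathcal R_{q,j}=[0,S(q,j)]$, so each set in the union is a closed interval based at $\sum_{k=0}^{j-1} f_k u_k/q^k$ of length $S(q,j)/q^j$, matching (\ref{eq}).

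The core of the argument is proving these $2^j$ intervals are pairwise disjoint. Given two distinct prefixes $u$ and $u'$ that first differ at some index $i<j$, I may assume $u_i=0$, $u_i'=1$. The worst case for overlap occurs when $u_{i+1},\ldots,u_{j-1}$ are all $1$ while $u_{i+1}',\ldots,u_{j-1}'$ are all $0$; the signed gap between the corresponding intervals is then
$$\frac{f_i}{q^i}-\sum_{k=i+1}^{j-1}\frac{f_k}{q^k}-\frac{S(q,j)}{q^j}.$$
The telescoping identity
$$\sum_{k=i+1}^{j-1}\frac{f_k}{q^k}+\frac{S(q,j)}{q^j}=\sum_{k=i+1}^\infty\frac{f_k}{q^k}=\frac{S(q,i+1)}{q^{i+1}}$$
rewrites this gap as $(qf_i-S(q,i+1))/q^{i+1}$. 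By the definition of $Q(i)$ as the largest root of $qf_i=S(q,i+1)$, this is strictly positive exactly when $q>Q(i)$; and since $Q$ is increasing and $i\le j-1$, the hypothesis $q>Q(j-1)$ secures the bound. The main obstacle lies precisely here: collapsing the a~priori $(j-i)$-term worst-case gap into the single quantity $qf_i-S(q,i+1)$ that the definition of $Q(i)$ directly governs.

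For the final claim, assume $q\ge\frac12(\varphi^2+\sqrt{\varphi^2+1})=\lim_j Q(j)$, so $q>Q(i)$ for every $i$. The same calculation applied to infinite sequences $u<u'$ differing first at index $i$ yields
$$x_{u'}-x_u\ge\frac{qf_i-S(q,i+1)}{q^{i+1}}>0,$$
proving strict monotonicity in the lexicographic order. For total disconnectedness, $\mathcal R_q$ lies, for every $j$, in a disjoint union of $2^j$ intervals of length $S(q,j)/q^j\sim(\varphi/q)^j\to 0$; hence every connected subset of $\mathcal R_q$ has zero diameter and must be a singleton.
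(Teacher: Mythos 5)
Your argument is correct, and the bulk of it — the prefix/tail decomposition, the appeal to Lemma~\ref{reachability} for the tail, the worst-case gap $\frac{f_i}{q^i}-\sum_{k=i+1}^{j-1}\frac{f_k}{q^k}-\frac{S(q,j)}{q^j}$ collapsed via the telescoping identity into $\bigl(qf_i-S(q,i+1)\bigr)/q^{i+1}$ and controlled by the defining equation of $Q(i)$ together with the monotonicity of $Q$, and the identical computation for lexicographic monotonicity of $u\mapsto x_u$ — is exactly the paper's proof. (Incidentally, your interval length $S(q,j)/q^j$ is the right one; the displayed formula (\ref{eq}) writes $+S(q,j)$, but the paper's own disjointness computation uses $S(q,j)/q^j$, so that is a typo in the statement, not a defect of your proof.) Where you genuinely depart from the paper is the proof of total disconnectedness. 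The paper argues pointwise: given $x_u<x_v$ in $\mathcal R_q$ first disagreeing at index $h$, it shows that every $x_w\in\mathcal R_q$ strictly between them must carry the same prefix with $w_h=0$, hence is bounded above in a way that leaves the whole interval $\bigl(\tfrac{1}{q^{h+1}}S(q,h+1),\tfrac{f_h}{q^h}\bigr)$ of offsets unattained, and exhibits an explicit point between $x_u$ and $x_v$ outside $\mathcal R_q$. You instead observe that for \emph{every} $j$ the inclusion $\mathcal R_{q,j}\subseteq[0,S(q,j)]$ and the gap estimate (which only needs $q>Q(i)$ for $i\le j-1$, available since $q\geq\lim_jQ(j)$) place $\mathcal R_q$ inside a disjoint union of $2^j$ closed intervals of length $S(q,j)/q^j\to 0$, so every connected subset is a singleton. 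Your covering argument is cleaner and avoids the delicate bookkeeping in the paper's construction of the excluded point; the paper's argument gives slightly more information, namely explicit gaps of $\mathcal R_q$ between any two of its points. Both are valid.
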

\begin{proof}
 Fix $j\geq 1$ and let $q\in (Q(j-1),Q(j)]$.
 First of all we notice that 
 $$\mathcal R_q=\left\{\sum_{k=0}^\infty \frac{f_k}{q^k}u_k\mid u_k\in\{0,1\}\right\}=\bigcup_{u_0,\dots,u_{j-1}\in \{0,1\}}\sum_{k=0}^{j-1} \frac{f_k}{q^k} u_k+ \frac{1}{q^j}\mathcal R_{q,j}.$$
 Since $q\leq Q(j)$ then by Lemma \ref{reachability} we have $R_{q,j}=[0,S(q,j)]$ and this implies (\ref{eq}). We now want to prove that
 the union in (\ref{eq}) is disjoint.  To this end consider two binary sequences  $(v_0,\dots,v_{j-1})$ and $(u_0,\dots,u_{j-1})$ and assume $(v_0,\dots,v_{j-1})>(u_0,\dots,u_{j-1})$ 
in the lexicographic order. Let $h\in\{0,\dots,j-1\}$ be the smallest integer such that $v_h=1$ and
 $u_h=0$. Then $q>Q(j-1)\geq Q(h)$ implies
 $$\sum_{k=0}^{j-1} \frac{f_k}{q^k}v_k-\left(\sum_{k=0}^{j-1} \frac{f_k}{q^k}u_k+\frac{S(q,j)}{q^j}\right)\geq \frac{f_{h}}{q^{h}}-\sum_{k=h+1}^{j-1} \frac{f_k}{q^k}+\frac{S(q,j)}{q^j}=
 \frac{f_{h}}{q^{h}}-\frac{S(q,h+1)}{q^{h+1}}>0$$
 and, consequently, that the union in (\ref{eq}) is disjoint. 
 To show the second part of the claim we assume $q\geq \frac{1}{2}(\varphi^2+\sqrt{\varphi^2+1})$ and we let $u=(u_0,\dots,u_n,\dots,)$ and $v=(v_0,\dots,u_n,\dots,)$ be two infinite binary sequences such that $v>u$ in the lexicographic order. As
 above let $h$ be the smallest integer such that $0=u_h<v_h=1$ and define $x_\nu=\sum_{k=0}^\infty\frac{f_k}{q^k}\nu_k$ with $\nu\in\{u,v\}$. One has 
\begin{equation}\label{order}
x_v-x_u= \frac{f_h}{q^h}+\sum_{k=h+1}^\infty\frac{f_k}{q^k}v_k-\sum_{k=h+1}^\infty\frac{f_k}{q^k}u_k\geq \frac{f_h}{q^h}-\frac{1}{q^{h+1}}S(q,h+1)>0 
\end{equation}
Indeed  $q\geq \frac{1}{2}(\varphi^2+\sqrt{\varphi^2+1})$ implies $q>Q(h)$ for all $h\geq 0$. This implies that the map $\nu\mapsto x_\nu$ is increasing with respect to the lexicographic order. 
As a consequence, for all $x_w\in \mathcal R_q$ such that $x_u<x_w<x_v$ one has $u<w<v$ in the
lexicographic order. In particular $w_j=u_j=v_j$ for $j=0,\dots,h-1$, $w_h=u_h=0$ and $(w_{h+1},\dots, w_{h+n},\dots)> (u_{h+1},\dots,u_{h+n},\dots)$. Therefore $x_w=\sum_{k=0}^{h-1}\frac{f_k}{q^k}u_k+\delta_w$ and $\delta_w=\sum_{k=h+1}^\infty \frac{f_k}{q^k}w_k\leq \frac{1}{q^{h+1}}S(q,h+1)$. On the other hand the last inequality in (\ref{order}) implies that we may choose some
$$\delta\in\left(\frac{1}{q^{h+1}}S(q,h+1), \frac{f_h}{q^h}\right)$$
and setting $x:=x_u+\delta$ we get $x_u<x<x_v$  and, in view of above reasoning, $x\not\in \mathcal R_q$. 
By the arbitrariety of $x_u$ and $x_v$ we deduce that $\mathcal R_q$ is a totally disconnected set and this completes the proof.
\end{proof}

\end{document}